\documentclass[12pt]{article}
\textheight 225mm \textwidth 165mm \topmargin -1.0cm \oddsidemargin
1.8 cm \evensidemargin 1.8 cm \hoffset=-1.8cm

\usepackage{amsthm}
\usepackage{amsmath}
\usepackage{amssymb}
\usepackage{latexsym}
\usepackage{amsfonts}
\usepackage{color}
\usepackage{mathrsfs}
\usepackage{epsfig}
\usepackage{cite}
\usepackage{hyperref}
\newtheorem{theorem}{\bf Theorem}[section]
\newtheorem{lemma}[theorem]{\bf Lemma}

\numberwithin{equation}{section}
\numberwithin{theorem}{section}
\begin{document}
\title{Padovan numbers which are concatenations of three repdigits}
\author{Kisan Bhoi and Prasanta Kumar Ray}
\date{}
\maketitle
\begin{abstract}
Padovan sequence is a ternary recurrent sequence defined by the recurrence relation $P_{n+3}=P_{n+1}+P_{n}$ with initial terms $P_{0}=P_{1}=P_{2}=1.$ In this study it is shown that $114,151,200,265,351,465,616,816,3329,4410,7739,922111$ are the only Padovan numbers which are concatenations of three repdigits.
\end{abstract}
\noindent
\textbf{\small{\bf Keywords}}:Padovan numbers, concatenations, repdigits, linear forms in logarithms, reduction method.\\
{\bf 2020 Mathematics Subject Classification:} 11B39; 11J86; 11D61.
\section{Introduction}
A natural number $N$ is called a repdigit if it has only one distict digit in its decimal expansion. That is, $N$ is of the form $N=a\left(\frac{10^{m}-1}{9}\right)=\overline{\underbrace{ a \ldots a}_{m~ \text{times}}}$ for $0\leq a\leq 9$ and $m\geq1.$ A natural number of the form
\begin{equation*}
  N=\overline{\underbrace{a\ldots a}_{l~\text{times}}\underbrace{b\ldots b}_{m~\text{times}}\underbrace{c\ldots c}_{k~\text{times}}}
\end{equation*} can be viewed as concatenation of three repdigits where $a,l,m,k\geq1$ and $0\leq a,b,c\leq 9.$
Let $\{P_{n}\}_{n\geq0}$ be the Padovan sequence given by
\begin{equation*}
P_{n+3}=P_{n+1}+P_{n} ~\text{for all}~ n\geq1,
\end{equation*}
with $P_{0}=P_{1}=P_{2}=1.$ The first few terms of this sequence are
\begin{equation*}
  1, 1, 1, 2, 2, 3, 4, 5, 7, 9, 12, 16, 21, 28, 37, 49, 65, 86, 114, 151, 200, 265, 351,\ldots.
\end{equation*}
In recent times, concatenation of repdigits in different linear recurrence sequences has been studied by many researchers. For example, Alahmadi et al. \cite{ALAHMADI} studied the problem of finding all Fibonacci numbers which are concatenations of two repdigits. They examined the same problem for $k$-generalised Fibonacci sequence in \cite{ALAHMADI1}. Ddamulira searched all Padovan and Tribonacci numbers which are concatenations of two repdigits in \cite{Ddmulir}, \cite{Ddamulira}, respectively. Batte et al. \cite{Batte} proved that the only Perrin numbers which are concatenations of two repdigits are $\{10, 12, 17, 22, 29, 39, 51, 68, 90, 119, 277, 644\}$. In \cite{RAYGURU}, Rayguru and Panda showed that $35$ is the only balancing number which is concatenation
of two repdigits. Recently, Siar and Keskin \cite{Siar} found that $12, 13, 29, 33, 34, 70, 84, 88, 89, 228$, and $233$
are the only $k$-generalized Pell numbers, which are concatenations of two repdigits with at least two digits. Erduvan and Keskin \cite{Erduvan}, in their work obtained Lucas numbers which are concatenations of two repdigits. In \cite{Keskin}, the same authors looked into Lucas numbers which are concatenations of three repdigits. They proved that the only Lucas numbers which are concatenations of three repdigits are $123, 199, 322, 521, 843, 2207, 5778$. For the proof, they used lower bounds for linear forms in logarithms and Baker-Davenport reduction method due to Dujella and Peth\H o \cite{Dujella}.
In this paper, we find all Padovan numbers that are concatenations of three repdigits. More precisely, we completely solve the Diophantine equation
\begin{eqnarray*}
  P_{n}&=&\overline{\underbrace{a\ldots a}_{l~\text{times}}\underbrace{b\ldots b}_{m~\text{times}} \underbrace{c\ldots c}_{k~\text{times}}}\\
   &=&\overline{\underbrace{a\ldots a}_{l~\text{times}}}\cdot 10^{m+k}+\overline{\underbrace{b\ldots b}_{m~\text{times}}}\cdot 10^{k}+\overline{\underbrace{c\ldots c}_{k~\text{times}}} \\
   &=&a\left(\frac{10^{l}-1}{9}\right)10^{m+k}+b\left(\frac{10^{m}-1}{9}\right)10^{k}+c\left(\frac{10^{k}-1}{9}\right),
\end{eqnarray*} that is,
\begin{equation}\label{eq:1}
P_{n}=\frac{1}{9}\left(a 10^{l+m+k}-(a-b)10^{m+k}-(b-c)10^{k}-c\right).
\end{equation}
Our main result is the following.
\begin{theorem}\label{Th:1.}
  The only Padovan numbers that are concatenations of three repdigits are $114,151,200,265,351,465,616,816,3329,4410,7739,922111.$
\end{theorem}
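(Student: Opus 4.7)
The plan is to follow the Baker--Davenport paradigm used in closely related works, in particular Ddamulira's analysis of Padovan numbers that are concatenations of two repdigits \cite{Ddamulira} and Keskin--Erduvan's analysis of Lucas numbers that are concatenations of three repdigits \cite{Keskin}. First I would record the Binet-type representation $P_{n}=c_{1}\alpha^{n}+c_{2}\beta^{n}+c_{3}\gamma^{n}$, where $\alpha\approx 1.3247$ is the plastic number (the real root of $x^{3}-x-1=0$) and $\beta,\gamma$ are its complex conjugates with $|\beta|=|\gamma|=\alpha^{-1/2}<1$. This yields the estimate $P_{n}=c_{1}\alpha^{n}+O(\alpha^{-n/2})$ together with the two-sided bounds $\alpha^{n-2}\le P_{n}\le \alpha^{n-1}$ valid for $n$ sufficiently large. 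Combining these with $P_{n}\asymp \tfrac{a}{9}\cdot 10^{l+m+k}$ from \eqref{eq:1} shows that $L:=l+m+k$ is comparable to $n\log\alpha/\log 10$, so I may treat $n$ as the single large parameter and regard $L$, $m+k$, $k$ as linearly bounded in $n$.

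Next I would extract three successive linear forms in logarithms by rewriting \eqref{eq:1} around its dominant terms. Multiplying by $9$ and isolating $9c_{1}\alpha^{n}$ against $a\cdot 10^{L}$, then dividing by $a\cdot 10^{L}$ and applying $|\log(1+x)|\le 2|x|$, gives a first linear form
\[
\Lambda_{1}:=n\log\alpha - L\log 10 + \log(9c_{1}/a)
\]
satisfying $|\Lambda_{1}|\le C\cdot 10^{-(m+k)}$ for an explicit $C$. Matveev's theorem supplies a lower bound $|\Lambda_{1}|\ge\exp(-K_{1}(1+\log n))$, and comparing the two yields $m+k\le K_{1}'\log n$. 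Peeling off the leading block of \eqref{eq:1} and repeating the manipulation produces a second form
\[
\Lambda_{2}:=n\log\alpha - (m+k)\log 10 + \log\!\Bigl(\frac{9c_{1}}{a\cdot 10^{l}-(a-b)}\Bigr),
\]
bounded above by $C\cdot 10^{-k}$, from which a second application of Matveev yields $k\le K_{2}'\log n$. A third such step, peeling off the $(b-c)10^{k}$ block, furnishes a $\Lambda_{3}$ whose Matveev lower bound, compared with its trivial upper bound in terms of the constant tail $c$, produces an explicit but huge absolute bound $n\le N_{0}$ of shape $N_{0}=O((\log N_{0})^{4})$.

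To descend from $N_{0}$ to a tractable range I would apply the Baker--Davenport reduction lemma of Dujella--Peth\H{o} \cite{Dujella} three times in succession to $\Lambda_{1},\Lambda_{2},\Lambda_{3}$, cycling through each of the finitely many admissible digit triples $(a,b,c)\in\{0,\dots,9\}^{3}$ with $a\ge 1$. Standard continued-fraction computations with the convergents of $\log 10/\log\alpha$ should shrink the bound on $n$ to a few hundred at each stage. The remaining finite range is then swept directly: for each $n$ below the reduced bound, one tests whether $P_{n}$ has the prescribed concatenation shape for some $(l,m,k)$, which leaves precisely the twelve values announced in Theorem~\ref{Th:1.}.

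The main obstacle will be the careful bookkeeping of degenerate cases in which one of the $\Lambda_{i}$ vanishes or in which the height of the algebraic number inside the logarithm in Matveev's bound becomes uncomfortable. Triples with $a=b$, $b=c$, $c=0$ with small $k$, or $a\cdot 10^{l}=a-b$ must either be ruled out a priori or handled by re-deriving the corresponding form with the offending term absorbed; the same care is required when one of $l,m,k$ is so small (say $\le 2$) that the Binet error is not yet negligible. Each such branch reuses the Matveev plus Dujella--Peth\H{o} machinery verbatim, so the method is stable, but the write-up is forced to enumerate several parallel reductions.
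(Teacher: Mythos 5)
Your overall architecture (three telescoping linear forms, Matveev's theorem, then Baker--Davenport reduction and a finite computer search) is exactly the paper's, but the quantitative claims attached to the first two linear forms are wrong, and the error propagates fatally into the third step. After isolating $9C_{\alpha}\alpha^{n}$ against $a\cdot 10^{l+m+k}$, the residual is $-9(C_{\beta}\beta^{n}+C_{\gamma}\gamma^{n})-(a-b)10^{m+k}-(b-c)10^{k}-c$, of absolute value at most $11\cdot 10^{m+k}$; dividing by $a\cdot 10^{l+m+k}$ gives $\lvert\Gamma_{1}\rvert<11\cdot 10^{-l}$, not $C\cdot 10^{-(m+k)}$. (Your claimed bound already fails for $l=1$ and $m+k$ large, where the relative error is about $\lvert a-b\rvert/(10a)$, which need not be small.) So the first Matveev application bounds $l$, the length of the \emph{leading} repdigit block, not $m+k$; likewise the second form is small of order $10^{-m}$ and bounds $m$, not $k$.

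This is not a mere relabelling: your scheme never produces a bound on $l$, and $l$ (together with $m$) enters the logarithmic height of $\eta_{1}=\bigl(a10^{l+m}-(a-b)10^{m}-(b-c)\bigr)/(9C_{\alpha})$ in the third form, via $h(\eta_{1})\approx (l+2m)\log 10$. Without $l\ll 1+\log n$ and $m\ll(1+\log n)^{2}$ from the first two steps, the parameter $A_{1}$ in Matveev's theorem is of size $n$, the resulting lower bound for $\lvert\Lambda_{3}\rvert$ is only $\exp(-Kn\log n)$, and comparison with the upper bound $2.5\,\alpha^{-n}$ yields the vacuous inequality $n\log\alpha<Kn\log n$. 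The correct chain is: form 1 gives $l\ll 1+\log n$, form 2 gives $m\ll(1+\log n)^{2}$, form 3 gives $n\ll(1+\log n)^{3}$ (not $(\log n)^{4}$), whence $n<2\cdot 10^{56}$. A secondary omission: in the second and third reductions the quantity $\psi$ depends on $l$ and on $(l,m)$ respectively, so the Baker--Davenport step must be looped over the reduced ranges $l<62$ and $m\le 66$ obtained from the earlier reductions, not merely over the digit triples $(a,b,c)$.
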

For the proof of Theorem \ref{Th:1.}, we run a program in \textit{Mathemtica} and search all solutions of \eqref{eq:1} with $n\leq560$. Then, we take $n>560$ and write \eqref{eq:1} in three different ways to get three linear forms in logarithms. We apply lower bounds for linear forms in logarithms to obtain an upper bound on $n.$ To reduce the bound, we use Baker-Davenport reduction method de Weger \cite{Weger}. As a result, we get $n\leq 546,$ a contradiction. In this way, we complete the proof of our main result.

We need some preliminary results which are discussed in the following section.
\section{Preliminary Results}
\subsection{Some properties of Padovan sequence}
The characteristic equation of $\{P_{n}\}_{n\geq0}$ is $f(x)=x^{3}-x-1=0$ which has one real root $\alpha$ and two complex conjugate roots $\beta$ and $\gamma$ given by
\begin{equation*}
\alpha=\frac{r_{1}+r_{2}}{6},~~\beta=\bar{\gamma}=\frac{-r_{1}-r_{2}+i\sqrt{3}(r_{1}-r_{2})} {12},
\end{equation*} where
$r_{1}=\sqrt[3]{108+12\sqrt{69}}$ and $r_{2}=\sqrt[3]{108-12\sqrt{69}}$. Binet's formula for Padovan sequence is given by
\begin{equation}\label{eq:3}
  P_{n}=C_{\alpha}\alpha^{n}+C_{\beta}\beta^{n}+C_{\gamma}\gamma^{n},
\end{equation}
where
\begin{align*}
    &C_{\alpha}=\frac{(1-\beta)(1-\gamma)}{(\alpha-\beta)(\alpha-\gamma)}=\frac{\alpha+1}{-\alpha^2+3\alpha+1},\\
    &C_{\beta}=\frac{(1-\alpha)(1-\gamma)}{(\beta-\alpha)(\beta-\gamma)}=\frac{\beta+1}{-\beta^2+3\beta+1},\\ &C_{\gamma}=\frac{(1-\alpha)(1-\beta)}{(\gamma-\alpha)(\gamma-\beta)}=\frac{\gamma+1}{-\gamma^2+3\gamma+1}.
\end{align*}
The minimal polynomial of $C_{\alpha}$ over $\mathbb{Z}$ is $23x^{3}-23x^{2}+6x-1$ with all its zeros of unit modulus.
Numerically, we have the following estimates:
\begin{align*}
 1.32<\alpha<1.33,\\
 0.86<\vert\beta\rvert=\vert\gamma\vert =\alpha^{-1/2}<0.87,\\
 0.72<C_{\alpha}<0.73,\\
 0.24<\vert C_{\beta}\vert=\vert C_{\gamma}\vert<0.25.
 \end{align*}
 Let $e(n):=P_{n}-C_{\alpha}\alpha^{n}=C_{\beta}\beta^{n}+C_{\gamma}\gamma^{n}.$ Then $\vert e(n)\vert<\frac{1}{\alpha^{n/2}}$ for all $n\geq1.$
 Using induction, one can prove that
 \begin{equation}\label{eq:2}
 \alpha^{n-3}\leq P_{n}\leq \alpha^{n-1} ~\text{holds for all}~ n\geq 1.
 \end{equation}
\subsection{Linear forms in Logarithms}
Baker's theory plays an important role in reducing the bounds concerning linear forms in logarithms of algebraic numbers.
Let $\eta$ be an algebraic number with minimal primitive polynomial
\begin{align*}
     f\left(X\right) =a_{0}x^{d}+a_{1}x^{d-1}+\ldots+a_{d}= a_{0}\prod_{i=1}^{d}(X-\eta^{(i)})\in {\mathbb{Z}\left[X\right]},
\end{align*}
where  the leading coefficient $a_{0}>0$, and $\eta^{(i)}$'s are conjugates of $\eta$. Then, the \textit{logarithmic height} of $\eta$ is given by
\begin{align*}
    h(\eta)=\frac{1}{d}\left(\log a_{0}+\sum_{j=1}^{d}\text{max}\{0,\log\vert\eta^{(j)}\vert\}\right).
\end{align*}
The following are some properties of the logarithmic height function (see \cite[Property 3.3]{W}), which will be used to calculate the heights of the algebraic integers
\begin{gather*}
       h(\eta+\gamma)\leq h(\eta)+h(\gamma)+\log2,\\
    h(\eta\gamma^{\pm1})\leq h(\eta)+h(\gamma),\\
     h(\eta^{k})=|k|h(\eta), k\in \mathbb{Z}.
\end{gather*}
With these notations, Matveev (see \cite{Matveev} or \cite[Theorem 9.4]{Bugeaud1}) proved the following result.
 \begin{theorem}\label{th:2}
 Let $\eta_{1},\eta_{2},\ldots , \eta_{l}$ be positive real algebraic integers in a real algebraic number field $\mathbb{L}$  of degree $d_{\mathbb{L}}$ and $b_{1},b_{2},\ldots, b_{l}$ be non zero  integers. If
 $\Gamma=\prod_{i=1}^{l}\eta_{i}^{b_{i}}-1
$ is not zero, then
\begin{align*}
    \log \vert\Gamma\vert>-1.4\cdot30^{l+3}l^{4.5}d_{\mathbb{L}}^{2}(1+\log d_{\mathbb{L}})(1+\log D)A_{1}A_{2}\ldots A_{l},
    \end{align*} where $D=\text{max}\{\vert b_{1} \vert,\vert b_{2} \vert,\ldots, \vert b_{l} \vert\}$ and
$A_{1},A_{2},\ldots, A_{l}$ are positive real numbers such that
\begin{align*}
    A_{j}\geq \text{max}\{d_{\mathbb{L}}h\left( \eta_{j}\right),\vert\log\eta_{j}\vert, 0.16\}  ~~ \text{for}~   j=1,\ldots,l.
\end{align*}
\end{theorem}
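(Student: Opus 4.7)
The plan is to follow the standard Baker--Davenport template for Diophantine equations involving linear recurrences and repdigits. I first dispose of small indices by brute force: a direct computer check of \eqref{eq:1} for $n\le 560$ recovers exactly the twelve Padovan numbers listed in Theorem \ref{Th:1.}. For the remainder I assume $n>560$. Since $P_n$ is written as a concatenation with $l+m+k$ digits, $10^{l+m+k-1}\le P_n<10^{l+m+k}$, while \eqref{eq:2} yields $\alpha^{n-3}\le P_n\le\alpha^{n-1}$; combining the two shows $l+m+k\asymp n$, so it will suffice to bound each of $l,m,k$ polylogarithmically in $n$ in order to reach a contradiction.

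The core of the argument is to extract three linear forms in logarithms from \eqref{eq:1}, each peeling off one block of repeated digits, and to apply Theorem \ref{th:2} to each. Keeping only the dominant term on the right of \eqref{eq:1} and using $P_n=C_\alpha\alpha^n+e(n)$ with $|e(n)|<\alpha^{-n/2}$, division by $a\cdot 10^{l+m+k}$ produces
\[
\Lambda_1=\frac{9C_\alpha}{a}\alpha^n 10^{-(l+m+k)}-1,\qquad |\Lambda_1|\ll 10^{-l};
\]
Matveev's theorem applied with $\eta_1=9C_\alpha/a,\ \eta_2=\alpha,\ \eta_3=10$ then yields $l\ll\log n$. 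For the second form, set $A:=a\cdot 10^l-(a-b)$, so that $9P_n=A\cdot 10^{m+k}-(b-c)10^k-c$, giving
\[
\Lambda_2=\frac{9C_\alpha}{A}\alpha^n 10^{-(m+k)}-1,\qquad |\Lambda_2|\ll 10^{-(l+m)};
\]
since $h(A)\le l\log 10+O(1)$, a second application of Matveev bounds $m$ in terms of $l$ and $\log n$. A third form, with $B:=A\cdot 10^m-(b-c)$ and $9P_n=B\cdot 10^k-c$, yields
\[
\Lambda_3=\frac{9C_\alpha}{B}\alpha^n 10^{-k}-1,\qquad |\Lambda_3|\ll 10^{-(l+m+k)},
\]
and one last Matveev estimate bounds $k$, producing an absolute (but astronomical) upper bound on $n$.

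To sharpen this bound down to the threshold $n\le 546$ promised in the introduction, I would apply the Baker--Davenport lattice reduction of de Weger \cite{Weger} to each of the three inequalities in turn: first tightening $l$ from $\Lambda_1$, then $m$ from $\Lambda_2$ with $l$ frozen at the reduced value, and finally $k$ from $\Lambda_3$ with $l$ and $m$ frozen. Each reduction typically shaves several orders of magnitude off the Matveev bound and is standard once the continued-fraction data for $\log\alpha/\log 10$ are computed. The delicate step will be the second one: because $h(9C_\alpha/A)$ grows linearly in $l$, the height parameter $A_j$ entering Matveev's bound also grows in $l$, and one must verify that this coupling does not swamp the gain $|\Lambda_2|\ll 10^{-(l+m)}$ when it is cascaded into the third form. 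Modulo this bookkeeping, the argument is a careful but routine execution of the Baker--Davenport paradigm, and the final bound $n\le 546$ contradicts the standing assumption $n>560$, completing the proof.
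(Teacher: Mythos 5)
The statement you were asked to prove is Theorem \ref{th:2}, which is Matveev's lower bound for nonzero linear forms in logarithms of algebraic numbers. Your proposal does not address this statement at all: what you have sketched is a proof of the paper's main result, Theorem \ref{Th:1.} (the classification of Padovan numbers that are concatenations of three repdigits), and your argument \emph{invokes} Theorem \ref{th:2} three times as a black box. As a proof of Theorem \ref{th:2} this is circular --- you cannot establish Matveev's inequality by applying Matveev's inequality to a Diophantine equation.

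For context, the paper itself offers no proof of Theorem \ref{th:2}; it is quoted verbatim from Matveev's work (see \cite{Matveev}, or \cite[Theorem 9.4]{Bugeaud1}), and its actual proof is a long and deep piece of transcendence theory (Baker's method with explicit constants, interpolation determinants, zero estimates on group varieties) that no short sketch could reproduce. So there is nothing in the paper to compare your argument against, and the correct response to this prompt would have been to recognize the statement as an imported classical result rather than to prove something else. Your sketch of Theorem \ref{Th:1.} is, on its own terms, a reasonable outline of the Baker--Davenport strategy the authors follow (three linear forms peeling off the blocks $l$, $m$, $k$, followed by three de Weger reductions), but it is an answer to a different question.
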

\subsection{Reduction method}
The following Baker-Davenport reduction method due to de Weger \cite{Weger} is used to reduce the bounds of the variables which are too large.
Let $\vartheta_{1}, \vartheta_{2}, \beta \in \mathbb{R}$ be given and $ x_{1},x_{2} \in \mathbb{Z}$ be unknowns.
Suppose
\begin{align}\label{eq:T}
    \Lambda=\beta+x_{1}\vartheta_{1}+x_{2}\vartheta_{2}.
\end{align}
Set
$X=\max\{\vert x_{1} \vert, \vert x_{2} \vert\}$ and $ X_{0}, Y $ be positive.
Assume that
\begin{align}\label{eq:V}
    X\leq X_{0},
\end{align}
and
\begin{align}\label{eq:U}
    \vert\Lambda\vert < c \cdot\exp(-\delta\cdot Y),
\end{align}
where $c, \delta$ be positive constants.
When $\beta\neq0$ in \eqref{eq:T}, put $\vartheta=-\vartheta_{1}/\vartheta_{2}$ and $\psi=\beta/\vartheta_{2}$.
Then we have
\begin{align*}
    \frac{\Lambda}{\vartheta_{2}}=\psi-x_{1}\vartheta+x_{2}.
\end{align*}
Let $p/q$ be a convergent of $\vartheta$ with $q>X_{0}.$ For a real number $x,$ we let
$
\|x\|=\min\{\vert x-n \vert, n \in \mathbb{Z} \}$ be the distance from $x$ to the nearest integer. We have the following result.
\begin{lemma}(\cite[Lemma 3.3]{Weger})\label{le:2.3}
Suppose that
\begin{align*}
    \|q\psi\|>\frac{2X_{0}}{q}.
\end{align*}
Then, the solutions of \eqref{eq:U} and \eqref{eq:V} satisfy
\begin{align*}
     Y<\frac{1}{\delta}\log\left(\frac{q^{2}c}{\vert
     \vartheta_{2}\vert X_{0}}\right).
\end{align*}
\end{lemma}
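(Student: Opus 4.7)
The plan is to derive the stated upper bound on $Y$ by combining a two-sided estimate for $|\Lambda|$: a lower bound coming from the diophantine hypothesis $\|q\psi\|>2X_{0}/q$, and the given upper bound $|\Lambda|<c\exp(-\delta Y)$. The setup already supplies the crucial rearrangement $\Lambda/\vartheta_{2}=\psi-x_{1}\vartheta+x_{2}$, so the work is to relate $\|q\psi\|$ to $|\Lambda|$ by using the approximation properties of the convergent $p/q$.

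First, I multiply the identity $\Lambda/\vartheta_{2}=\psi-x_{1}\vartheta+x_{2}$ through by $q$ to obtain
\[
\frac{q\Lambda}{\vartheta_{2}}=q\psi-x_{1}(q\vartheta)+x_{2}q.
\]
Since $p/q$ is a convergent of the simple continued fraction expansion of $\vartheta$, the standard estimate $|q\vartheta-p|<1/q$ holds, so I write $q\vartheta=p+\varepsilon$ with $|\varepsilon|<1/q$. Substituting and rearranging gives
\[
q\psi-\bigl(x_{1}p-x_{2}q\bigr)=\frac{q\Lambda}{\vartheta_{2}}+x_{1}\varepsilon.
\]
Now $x_{1}p-x_{2}q\in\mathbb{Z}$, so the left-hand side is at least as large, in absolute value, as $\|q\psi\|$. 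Taking absolute values and applying the triangle inequality together with $|x_{1}|\le X\le X_{0}$ yields
\[
\|q\psi\|\;\le\;\frac{q\,|\Lambda|}{|\vartheta_{2}|}+\frac{X_{0}}{q}.
\]

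Next I invoke the hypothesis $\|q\psi\|>2X_{0}/q$. Combining it with the inequality just obtained gives $2X_{0}/q<q|\Lambda|/|\vartheta_{2}|+X_{0}/q$, which simplifies to
\[
|\Lambda|\;>\;\frac{|\vartheta_{2}|\,X_{0}}{q^{2}}.
\]
Pairing this lower bound with the upper bound $|\Lambda|<c\exp(-\delta Y)$ from \eqref{eq:U} produces
\[
\frac{|\vartheta_{2}|\,X_{0}}{q^{2}}\;<\;c\exp(-\delta Y),
\]
and taking logarithms and solving for $Y$ yields exactly the claimed bound
\[
Y\;<\;\frac{1}{\delta}\log\!\left(\frac{q^{2}c}{|\vartheta_{2}|\,X_{0}}\right).
\]
There is no real obstacle here: the proof is a short two-step chain, and the only technical ingredient beyond algebraic manipulation is the standard continued-fraction inequality $|q\vartheta-p|<1/q$ for convergents, which is what motivates the "$2X_{0}/q$" threshold in the hypothesis in the first place (it absorbs the $X_{0}/q$ error term and leaves a clean $X_{0}/q$ to match the main term).
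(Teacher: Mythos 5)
Your proof is correct. The paper itself gives no argument for this lemma---it is quoted directly from de Weger's \cite[Lemma 3.3]{Weger}---and your derivation (multiplying $\Lambda/\vartheta_{2}=\psi-x_{1}\vartheta+x_{2}$ by $q$, using the convergent estimate $\lvert q\vartheta-p\rvert<1/q$ and the integrality of $x_{1}p-x_{2}q$ to get $\|q\psi\|\le q\lvert\Lambda\rvert/\lvert\vartheta_{2}\rvert+X_{0}/q$, then combining with the two hypotheses) is exactly the standard proof of that result.
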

We state and prove the following lemma which gives a relation between $n$ amd $l+m+k$ of \eqref{eq:1}.
\begin{lemma}
All solutions to the Diophantine equation \eqref{eq:1} satisfy
\begin{equation*}
  (l+m+k)\log10-3<n\log\alpha<(l+m+k)\log10+1.
\end{equation*}
\end{lemma}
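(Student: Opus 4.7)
The plan is to observe that the concatenation representation pins down the number of decimal digits of $P_n$ exactly, and then to compare this with the exponential growth rate of the Padovan sequence.

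First I would note that since $a\geq 1$ is the leading digit of the concatenation, the number $P_n$ has exactly $l+m+k$ digits in base $10$. This immediately gives the two-sided bound
\begin{equation*}
10^{l+m+k-1}\leq P_{n}<10^{l+m+k}.
\end{equation*}
Next I would invoke the elementary bound \eqref{eq:2}, namely $\alpha^{n-3}\leq P_{n}\leq \alpha^{n-1}$, which is already established in the preliminary section.

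Combining the upper half of the first inequality with the lower half of the second yields $\alpha^{n-3}\leq P_n<10^{l+m+k}$; taking logarithms gives $(n-3)\log\alpha<(l+m+k)\log 10$, hence
\begin{equation*}
n\log\alpha<(l+m+k)\log 10+3\log\alpha<(l+m+k)\log 10+1,
\end{equation*}
since $3\log\alpha<3\log(1.33)<1$. Conversely, combining $10^{l+m+k-1}\leq P_n$ with $P_n\leq\alpha^{n-1}$ gives $(l+m+k-1)\log 10\leq (n-1)\log\alpha$, so
\begin{equation*}
n\log\alpha\geq(l+m+k)\log 10-\log 10+\log\alpha>(l+m+k)\log 10-3,
\end{equation*}
since $\log 10-\log\alpha<\log 10<3$. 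Putting the two inequalities together yields the claim.

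There is no real obstacle here; the lemma is essentially a bookkeeping step that will later be used to replace the quantity $l+m+k$ by something comparable to $n$ (up to a bounded multiplicative constant) inside the linear forms in logarithms. The only care needed is to use strict inequalities consistently and to notice that the leading-digit condition $a\geq 1$ is what makes the digit count equal exactly $l+m+k$.
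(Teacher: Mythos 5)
Your proposal is correct and follows essentially the same route as the paper: both bound $P_n$ between $10^{l+m+k-1}$ and $10^{l+m+k}$ via the digit count and combine this with the estimate $\alpha^{n-3}\leq P_n\leq\alpha^{n-1}$ from \eqref{eq:2}, then take logarithms. The only difference is cosmetic (you make the leading-digit observation $a\geq1$ explicit, which the paper leaves implicit), so there is nothing further to add.
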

\begin{proof}
From \eqref{eq:1} and \eqref{eq:2} we get
\begin{equation*}
   \alpha^{n-3}\leq P_{n}<10^{l+m+k}.
\end{equation*}
Taking logarithm on both sides, we have
\begin{equation*}
  (n-3)\log\alpha<(l+m+k)\log10,
\end{equation*}
which leads to
\begin{equation*}
  n\log\alpha<(l+m+k)\log10+3\log\alpha<(l+m+k)\log10+1.
\end{equation*}
On the other hand, for the lower bound, \eqref{eq:1} implies that
\begin{equation*}
  10^{l+m+k-1}<P_{n}\leq\alpha^{n-1}.
\end{equation*}
Taking logarithm on both sides, we get
\begin{equation*}
  (l+m+k-1)\log10<(n-1)\log\alpha,
\end{equation*} which leads to
\begin{equation*}
  (l+m+k-1)\log10-3<(l+m+k-1)\log10+\log\alpha<n\log\alpha.
\end{equation*}
\end{proof}
The following result will also be used in our proof.
\begin{lemma}\label{le:*}
If $\lvert e^{z}-1\rvert<y<\frac{1}{2}$ for real values of $z$ and $y$, then $\lvert z\rvert<2y$.
\end{lemma}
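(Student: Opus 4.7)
The plan is to break the argument into the two cases $z \geq 0$ and $z < 0$, and in each case apply the mean value theorem to the exponential on the interval with endpoints $0$ and $z$. This produces a $\xi$ strictly between $0$ and $z$ with $e^{z}-1 = z\,e^{\xi}$ (the possibility $z=0$ is immediate since then $|z| = 0 < 2y$, as $y > 0$ is forced by $0 \le |e^{z}-1| < y$). Rearranging,
\[
|z| \;=\; \frac{|e^{z}-1|}{e^{\xi}} \;<\; \frac{y}{e^{\xi}}.
\]

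If $z \geq 0$, then $\xi \geq 0$ and $e^{\xi} \geq 1$, so the inequality already gives $|z| < y < 2y$. The delicate case is $z < 0$, and this is the only place where the assumption $y < 1/2$ enters. Here $|e^{z}-1| = 1 - e^{z}$, so the hypothesis reads $e^{z} > 1 - y > 1/2$. Since $\xi$ lies between $z$ and $0$, one also has $e^{\xi} > e^{z} > 1/2$, and substituting into the displayed bound yields $|z| < y/(1/2) = 2y$, as required.

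There is no serious obstacle; the only thing to be careful about is that the constant $2$ in the conclusion is forced precisely by how much we lose when we replace $e^{\xi}$ by its lower bound $1/2$ in the negative case, which is why the hypothesis demands $y < 1/2$ rather than some larger constant. The result is a standard linearisation estimate, and will be used subsequently in the paper to pass from an upper bound on $|e^{\Lambda}-1|$, for $\Lambda$ a small linear form in logarithms, to the corresponding upper bound on $|\Lambda|$ itself.
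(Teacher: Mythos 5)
Your proof is correct. You use the same case split as the paper ($z\geq 0$ versus $z<0$), but the mechanism is different: you invoke the mean value theorem to write $e^{z}-1=z\,e^{\xi}$ with $\xi$ between $0$ and $z$, which gives the unified identity $\lvert z\rvert=\lvert e^{z}-1\rvert/e^{\xi}$ and reduces everything to bounding $e^{\xi}$ from below (by $1$ when $z\geq0$, by $1-y>1/2$ when $z<0$). The paper instead argues purely with elementary inequalities: for $z\geq0$ it uses $z\leq e^{z}-1$ directly, and for $z<0$ it deduces $e^{\lvert z\rvert}<2$ from the hypothesis and then uses the algebraic identity $e^{\lvert z\rvert}-1=e^{\lvert z\rvert}\lvert e^{z}-1\rvert$ together with $\lvert z\rvert<e^{\lvert z\rvert}-1$. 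Both routes make the constant $2$ appear for the same underlying reason (the loss incurred in the negative case is controlled by the hypothesis $y<1/2$), and your observation to that effect is accurate; the paper's version avoids the mean value theorem at the cost of a slightly less transparent manipulation, while yours is marginally less elementary but handles both cases through one formula. Your handling of the degenerate case $z=0$ and the deduction $y>0$ are fine, though the paper simply absorbs $z=0$ into the case $z\geq0$.
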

\begin{proof}
   If $z\geq0,$ we have $0\leq z\leq e^{z}-1=\lvert e^{z}-1\rvert<y.$ If $z<0,$ then $\lvert e^{z}-1\rvert <\frac{1}{2}.$ From this, we get $e^{\lvert z\rvert}<2,$ and therefore $0<\lvert z \rvert<e^{\lvert z\rvert}-1=e^{\lvert z \rvert} \lvert e^{z}-1\rvert<2y$. so, in both cases, we get $\lvert z\rvert<2y.$
\end{proof}
The following lemma will be used in our proof. It is seen in \cite[Lemma 7]{GUZMAN}.
\begin{lemma}\label{le:q}
Let $r\geq 1$ and $H>0$ be such that $H>(4r^{2})^r$ and $H>L/(\log L)^r$. Then
\begin{align*}
L<2^{r}H(\log H)^r.
\end{align*}
\end{lemma}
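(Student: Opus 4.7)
The plan is to bootstrap the two hypotheses into the conclusion by first proving the intermediate polynomial bound $L < H^{2}$. Once that is in hand, the stated inequality follows in a single line: $\log L < 2\log H$ gives $(\log L)^{r} < 2^{r}(\log H)^{r}$, and feeding this into the trivial rearrangement $L < H(\log L)^{r}$ of the second hypothesis produces $L < 2^{r} H (\log H)^{r}$ immediately.

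So the real work goes into establishing $L < H^{2}$, which I would do by contradiction. Assume $L \geq H^{2}$; then $H \leq L^{1/2}$, and substituting into $L < H(\log L)^{r}$ yields the self-referential bound $L < (\log L)^{2r}$. The lever to derive a contradiction is the first hypothesis $H > (4r^{2})^{r}$, which combined with $L \geq H^{2}$ forces $L > (4r^{2})^{2r}$. Setting $t = L^{1/(2r)}$, the required contradiction rephrases as the analytic inequality $t > 2r\log t$ whenever $t > 4r^{2}$ and $r \geq 1$. I would verify this by introducing the auxiliary function $\phi(t) = t - 2r\log t$: since $\phi'(t) = 1 - 2r/t > 0$ on $[2r,\infty)$ and $4r^{2} \geq 2r$ for $r \geq 1$, $\phi$ is increasing on $[4r^{2},\infty)$, so it suffices to check $\phi(4r^{2}) > 0$, which reduces to the elementary inequality $r > \log(2r)$ valid for all $r \geq 1$.

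The main obstacle, mild as it is, is noticing that the somewhat opaque hypothesis $H > (4r^{2})^{r}$ is calibrated exactly to close the contradiction: it is precisely the strength required to push $L$ below $H^{2}$, after which the stated polynomial bound drops out by routine logarithmic manipulation. Without this bootstrap, one is stuck with the implicit relation $L < H(\log L)^{r}$, which does not by itself yield an explicit bound involving only $H$ and $\log H$. Everything past the $L < H^{2}$ step is essentially one substitution.
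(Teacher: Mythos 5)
Your proof is correct. Note that the paper itself contains no proof of this lemma --- it is quoted verbatim from \cite[Lemma 7]{GUZMAN} --- so there is no argument in the text to compare against; what you have written is a complete, self-contained verification, and the bootstrap through $L<H^{2}$ is exactly the right mechanism. Checking the details: assuming $L\geq H^{2}$ and feeding $H\leq L^{1/2}$ into $L<H(\log L)^{r}$ gives $L<(\log L)^{2r}$, while $L\geq H^{2}>(4r^{2})^{2r}$ places $t=L^{1/(2r)}$ above $4r^{2}$; there your monotonicity argument for $\phi(t)=t-2r\log t$, anchored by $\phi(4r^{2})>0\iff r>\log(2r)$ (true for $r\geq1$ since the difference is $\log 2^{-1}+1>0$ at $r=1$ and nondecreasing), yields $t>2r\log t$, i.e.\ $L>(\log L)^{2r}$, the desired contradiction. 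The only implicit assumption worth flagging is $\log L>0$, needed to divide by $(\log L)^{r}$ and to take $2r$-th roots of $t^{2r}<(2r\log t)^{2r}$; this is harmless because for $L\leq e$ the conclusion is trivial (as $H>(4r^{2})^{r}\geq 4$ forces $2^{r}H(\log H)^{r}>e$), and in the paper's application $L=n>560$ anyway.
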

\section{Proof of the main result}
Using \textit{Mathematica}, we find all solutions to the Diophantine equation \eqref{eq:1} for $a\geq1$, $0\leq a,b,c\leq9$ and $1\leq n\leq 560$ that are listed in Theorem \ref{Th:1.}. Now, assume that $n>560.$ We ignore the cases $a=b\neq c$ and $a\neq b=c$ since these cases have been treated in \cite{Ddmulir}. We also ignore the case $a=b=c$ as it has been studied in \cite{Ddamulira*}.
Using \eqref{eq:1} and \eqref{eq:3}, we have
\begin{equation}\label{eq:4}
C_{\alpha}\alpha^{n}+C_{\beta}\beta^{n}+C_{\gamma}\gamma^{n}=\frac{1}{9}\left(a 10^{l+m+k}-(a-b)10^{m+k}-(b-c)10^{k}-c\right).
\end{equation}
We can write \eqref{eq:4}as
\begin{equation}\label{eq:5}
9C_{\alpha}\alpha^{n}-a 10^{l+m+k}=-9(C_{\beta}\beta^{n}+C_{\gamma}\gamma^{n})-(a-b)10^{m+k}-(b-c)10^{k}-c.
\end{equation}
Taking absolute values on both sides of \eqref{eq:5}, we have that
\begin{align*}
  \left\vert9C_{\alpha}\alpha^{n}-a 10^{l+m+k}\right \vert & \leq 9\left\vert ( C_{\beta}\beta^{n}+C_{\gamma}\gamma^{n})\right\vert+\lvert(a-b)\rvert10^{m+k}+\lvert(b-c)\rvert10^{k}+\lvert c\rvert\\
  &\leq9\alpha^{-n/2}+9\cdot10^{m+k}+9\cdot10^{k}+9\\
  &<11\cdot10^{m+k}
\end{align*}
Dividing both sides of the above inequality by $a10^{l+m+k}$ gives
\begin{equation}\label{eq:6}
  \left\vert \left(\frac{9C_{\alpha}}{a}\right)\alpha^{n}10^{-(l+m+k)}-1\right\vert<\frac{11\cdot10^{m+k}}{a10^{l+m+k}}<\frac{11}{10^{l}}.
\end{equation}
Let
\begin{equation*}
  \Gamma_{1}=\left(\frac{9C_{\alpha}}{a}\right)\alpha^{n}10^{-(l+m+k)}-1.
 \end{equation*}
 Note that $  \Gamma_{1}\neq 0.$ If $  \Gamma_{1}=0,$ then
 \begin{equation}\label{e:*}
   C_{\alpha}\alpha^{n}=\frac{a}{9}10^{l+m+k}.
 \end{equation}
Let $\sigma$ be the automorphism of the Galois group of the splitting field of $f(x)$ over $\mathbb{Q}$ defined by $\sigma(\alpha)=\beta,$ where $f(x)=x^{3}-x-1$ is the minimal polynomial of $\alpha$.
Applying $\sigma$ on both sides of \eqref{e:*}, and taking absolute values, we get
\begin{align*}
  \lvert C_{\beta}\beta^{n}\rvert= \left\vert\left(\frac{a}{9}10^{l+m+k}\right)\right\vert,
\end{align*} which is not possible since $\lvert C_{\beta}\beta^{n}\rvert <1$, whereas $\left\vert\left(\frac{a}{9}10^{l+m+k}\right)\right\vert>1.$
Therefore, $ \Gamma_{1}\neq 0.$
 Now, we are ready to apply Theorem \ref{th:2} with the following data:
\begin{align*}
    \eta_{1}= \frac{9C_{\alpha}}{a},~ \eta_{2}=\alpha,~  \eta_{3}=10,~ b_{1}=1,~ b_{2}=n,~ b_{3}=-(l+m+k),~ l=3,
\end{align*} with $d_{\mathbb{L}}=[\mathbb{Q}(\alpha):\mathbb Q] = 3.$ Since $l+m+k<n,$ take $D =n$. The heights of $\eta_{1}, \eta_{2}, \eta_{3}$ are calculated as follows:
\begin{equation*}
   h(\eta_{1})=h(9C_{\alpha}/a)\leq h(9)+h(C_{\alpha})+h(a)\leq 2\log9+\frac{\log23}{3}<5.44,
\end{equation*}
\begin{equation*}
h(\eta_{2})=h(\alpha)=\frac{\log\alpha}{3},~h(\eta_{3})=h(10)=\log 10.
\end{equation*}
Thus, we take
\begin{equation*}
 A _{1}=16.32,~~~A_{2}=\log\alpha,~~~~A_{3}=3\log 10.
\end{equation*}
Applying Theorem \ref{th:2} we find
\begin{align*}
 \log \vert\Gamma_{1}\vert &>-1.4\cdot30^{6}3^{4.5}3^2(1+\log3)(1+\log n)(16.32)(\log\alpha)(3\log 10)\\
 &>-8.58\cdot10^{13}\log(1+\log n).
\end{align*}
Comparing the above inequality with \eqref{eq:6} gives
\begin{align*}
l\log 10-\log 11<8.58\cdot10^{13}(1+\log n),
\end{align*}
which leads to
\begin{equation}\label{eq:14}
 l\log 10<8.59\cdot10^{13}(1+\log n).
\end{equation}
Rewriting \eqref{eq:4} in another way, we have
\begin{equation}\label{eq:7}
     9C_{\alpha}\alpha^{n}-(a 10^{l}-(a-b))10^{m+k}=-9(C_{\beta}\beta^{n}+C_{\gamma}\gamma^{n})-(b-c)10^{k}-c.
\end{equation}
Taking absolute values on both sides of \eqref{eq:7}, we obtain
 \begin{align*}
 \left\vert9C_{\alpha}\alpha^{n}-(a 10^{l}-(a-b))10^{m+k}\right\vert & \leq9\vert(C_{\beta}\beta^{n}+C_{\gamma}\gamma^{n})\vert+\lvert(b-c)\rvert10^{k}+\lvert c\rvert\\
 & \leq 9\alpha^{-n/2}+9\cdot10^{k}+9\\
 &<11\cdot10^{k}
 \end{align*}
  Dividing both sides by $(a 10^{l}-(a-b))10^{m+k}$, we obtain
 \begin{equation}\label{eq:8}
   \left\vert 1-\left(\frac{9C_{\alpha}}{a10^{l}-(a-b)}\right)\alpha^{n}10^{-(m+k)} \right\vert<\frac{11\cdot10^{k}}{(a 10^{l}-(a-b))10^{m+k}}<\frac{11}{10^{m}}.
 \end{equation}
 Put
 \begin{equation*}
    \Gamma_{2}= 1-\left(\frac{9C_{\alpha}}{a10^{l}-(a-b)}\right)\alpha^{n}10^{-(m+k)}.
 \end{equation*}
 By using similar reason as above we can show that $ \Gamma_{2}\neq0$.
 Now, we apply Theorem \ref{th:2} with the following parameters:
\begin{align*}
    \eta_{1}= \frac{9C_{\alpha}}{a10^{l}-(a-b)},~ \eta_{2}=\alpha,~  \eta_{3}=10,~ b_{1}=1,~ b_{2}=n,~ b_{3}=-(m+k),~ l=3,
\end{align*} with $d_{\mathbb{L}}=[\mathbb{Q}(\alpha):\mathbb Q] = 3.$ Since $m+k<n,$ take $D =n$.
 Using the properties of logarithmic height, we estimate $h(\eta_{1})$ as follows:
  \begin{align*}
h(\eta_{1}) & =h\left(\frac{9C_{\alpha}}{a10^{l}-(a-b)}\right) \leq h(9C_{\alpha})+h\left(a10^{l}-(a-b)\right)\\
 & \leq h(9)+h(C_{\alpha})+h(a10^{l})+h(a-b)+\log2\\
 &\leq  h(9)+h(C_{\alpha})+h(a)+lh(10)+h(a)+h(b)+2\log2\\
 &\leq 4\log9+l\log10+\frac{\log23}{3}+2\log2\\
 &<11.23+l\log10.
 \end{align*}
Hence from \eqref{eq:14}, we get
\begin{equation*}
 h(\eta_{1})<11.23+8.59\cdot10^{13}(1+\log n)<8.6\cdot10^{13}(1+\log n).
  \end{equation*}
Furthermore, $h(\eta_{2})=h(\alpha)=\frac{\log\alpha}{3}$ and $h(\eta_{3})=h(10)=\log 10.$
Thus, we take
\begin{equation*}
 A _{1}=25.8\cdot10^{13}(1+\log n),~~~A_{2}=\log\alpha,~~~~A_{3}=3\log 10.
\end{equation*}
By virtue of Theorem \ref{th:2} we have
\begin{align*}
 \log \vert\Gamma_{2}\vert &>-1.4\cdot30^{6}3^{4.5}3^2(1+\log3)(1+\log n)(25.8\cdot10^{13}(1+\log n))(\log\alpha)(3\log 10)\\
 &>-1.36\cdot10^{27}(1+\log n)^{2}.
\end{align*}
Comparing the above inequality with \eqref{eq:8} gives
\begin{align*}
m\log 10-\log 11<1.36\cdot10^{27}(1+\log n)^{2},
\end{align*}
which leads to
\begin{equation}\label{eq:15}
 m\log 10<1.37\cdot10^{27}(1+\log n)^{2}.
\end{equation}
Again, rearranging \eqref{eq:4}, we have
 \begin{equation}\label{eq:9}
  9C_{\alpha}\alpha^{n}-(a 10^{l+m}-(a-b)10^{m}-(b-c))10^{k}=-9(C_{\beta}\beta^{n}+C_{\gamma}\gamma^{n})-c.
 \end{equation}
 Taking absolute values on both sides of \eqref{eq:9}, we obtain
  \begin{align*}
 \left \lvert 9C_{\alpha}\alpha^{n}-(a10^{l+m}-(a-b)10^{m}-(b-c))10^{k}\right\rvert&\leq9\vert(C_{\beta}\beta^{n}+C_{\gamma}\gamma^{n})\vert+\lvert c\rvert\\
 &\leq9\alpha^{n/2}+9<10.
 \end{align*}
 Dividing both sides by $9C_{\alpha}\alpha^{n}$, we obtain
 \begin{equation}\label{eq:10}
   \left\lvert 1-\left(\frac{a10^{l+m}-(a-b)10^{m}-(b-c)}{9C_{\alpha}}\right)\alpha^{-n}10^{k}\right\rvert<\frac{10}{9C_{\alpha}\alpha^{n}}<\frac{2.5}{\alpha^{n}} .
 \end{equation}
 Let
 \begin{equation*}
     \Gamma_{3}= 1-\left(\frac{a10^{l+m}-(a-b)10^{m}-(b-c)}{9C_{\alpha}}\right)\alpha^{-n}10^{k}.
 \end{equation*}
 By using similar arguments as before it is seen that $ \Gamma_{3}\neq0$.
 Now, we apply Theorem \ref{th:2} with the following parameters:
\begin{align*}
    \eta_{1}= \frac{a10^{l+m}-(a-b)10^{m}-(b-c)}{9C_{\alpha}},~ \eta_{2}=\alpha,~  \eta_{3}=10,~ b_{1}=1,~ b_{2}=-n,~ b_{3}=k,~ l=3,
\end{align*} with $d_{\mathbb{L}}=[\mathbb{Q}(\alpha):\mathbb Q] = 3.$ Since $k<n,$ take $D =n$.
 Using the properties of logarithmic height, we estimate $h(\eta_{1})$ as follows:
  \begin{align*}
h(\eta_{1}) & =h\left(\frac{a10^{l+m}-(a-b)10^{m}-(b-c)}{9C_{\alpha}}\right)\\
 &\leq h(a10^{l+m})+h((a-b)10^{m})+h(b-c)+h(9C_{\alpha})+2\log2\\
 & \leq h(a)+h(10^{l+m})+h(a-b)+h(10^{m})+h(b-c)+h(9)+h(C_{\alpha})+2\log2\\
 &\leq  h(9)+(l+m)h(10)+h(a)+h(b)+mh(10)+h(b)+h(c)+h(9)+h(C_{\alpha})+4\log2\\
 &\leq 6\log9+(l+m)\log10+m\log10+\frac{\log23}{3}+4\log2\\
 &<17.1+ l\log10+2m\log10.
 \end{align*}
Hence from \eqref{eq:14} and \eqref{eq:15}, we get
\begin{equation*}
 h(\eta_{1})<17.1+8.59\cdot10^{13}(1+\log n)+2.74\cdot10^{27}(1+\log n)^{2}<2.76\cdot10^{27}(1+\log n)^{2}.
  \end{equation*}
Furthermore, $h(\eta_{2})=h(\alpha)=\frac{\log\alpha}{3}$ and $h(\eta_{3})=h(10)=\log 10.$
Thus, we take
\begin{equation*}
 A _{1}=8.28\cdot10^{27}(1+\log n)^{2},~~~A_{2}=\log\alpha,~~~~A_{3}=3\log 10.
\end{equation*}
Using Theorem \ref{th:2} we have
\begin{align*}
 \log \vert\Gamma_{3}\vert &>-1.4\cdot30^{6}3^{4.5}3^2(1+\log3)(1+\log n)\left(8.28\cdot10^{27}(1+\log n)^{2}\right)(\log\alpha)(3\log 10)\\
 &>-4.35\cdot10^{48}(1+\log n)^{3}.
\end{align*}
Comparing the above inequality with \eqref{eq:10} gives
\begin{align*}
n\log \alpha-\log 2.5<4.35\cdot10^{48}(1+\log n)^{3},
\end{align*}
which further implies
\begin{equation*}
 n<15.6\cdot10^{48}(1+\log n)^{3}.
\end{equation*}
With the notation of Lemma \ref{le:q}, we take $r=3$, $L=n$ and $H=15.6\cdot10^{48}$. Applying Lemma \ref{le:q}, we have
\begin{align*}
n&<2^{3}(15.6\cdot10^{48})(\log(15.6\cdot10^{48})^{3}\\
&<2\cdot10^{56}.
\end{align*}
Now, to reduce the bounds, let
\begin{equation*}
  \Lambda_{1}=(l+m+k)\log 10- n\log\alpha-\log\left(\frac{9C_{\alpha}}{a}\right) .
\end{equation*}
The inequality \eqref{eq:6} can be written as
    \begin{equation*}
     \lvert e^{-\Lambda_{1}}-1\rvert<\frac{11}{10^{l}}.
 \end{equation*}
  Notice that $-\Lambda_{1}\neq0$ as $e^{-\Lambda_{1}}-1=\Gamma_{1}\neq0.$  Assuming $l\geq 2$, the right-hand side in the above inequality is at most $\frac{1}{2}.$
  Using Lemma \ref{le:*} we obtain
\begin{equation*}
 0<\lvert \Lambda_{1}\rvert<\frac{22}{10^l},
\end{equation*}
 which implies that
 \begin{equation*}
    \left\lvert (l+m+k)\log 10- n\log\alpha-\log\left(\frac{9C_{\alpha}}{a}\right)\right\rvert<22\exp(-l\log10) .
 \end{equation*}
 We also have
\begin{align*}
     \frac{\Lambda_{1}}{\log10}=(l+m+k)-n\left(\frac{\log\alpha}{\log10}\right)-\frac{\log(9C_{\alpha}/a)}{\log10}.
\end{align*}
Thus, we take
\begin{align*}
     &c=22,~ \delta=\log10,~ X_{0}=2\cdot10^{56},~ \psi=-\frac{\log(9C_{\alpha}/a)}{\log10}\\
     &\vartheta=\frac{\log\alpha}{\log10},~\vartheta_{1}=-\log\alpha,~\vartheta_{2}=\log10,~\beta=-\log\left(\frac{9C_{\alpha}}{a}\right).
\end{align*}
We find that $\frac{p_{121}}{q_{121}}= \frac{17548495448098062534665425097069587403994668418373610729747}{143694755301644024543505669827725455817494147218758974051600}$ is the $121$th convergent of $\vartheta$ such that $q_{121}>X_{0}$ and it satisfies  $\|q\psi\|>\frac{2X_{0}}{q}$. Applying Lemma \ref{le:2.3}, we get $l<\frac{1}{\log10}\log\left(\frac{ 143694755301644024543505669827725455817494147218758974051600^{2}\cdot 22}{\log10\cdot2\cdot10^{56}}\right)\leq62$.
Now, let
\begin{equation*}
  \Lambda_{2}=(m+k)\log 10- n\log\alpha-\log\left(\frac{9C_{\alpha}}{a10^{l}-(a-b)}\right) .
\end{equation*}
From \eqref{eq:8}, it is seen that
\begin{equation*}
     \lvert e^{-\Lambda_{2}}-1\rvert<\frac{11}{10^{m}}.
\end{equation*}
Notice that $\Lambda_{2}\neq0$ as $e^{-\Lambda_{2}}-1=\Gamma_{2}\neq0.$  Assuming $m\geq 2$, the right-hand side in the above inequality is at most $\frac{1}{2}.$
  Using Lemma \ref{le:*} we obtain
\begin{equation*}
 0<\lvert \Lambda_{2}\rvert<\frac{22}{10^m},
\end{equation*}
 which implies that
 \begin{equation*}
    \left\lvert (m+k)\log 10- n\log\alpha-\log\left(\frac{9C_{\alpha}}{a10^{l}-(a-b)}\right)\right\rvert<22\exp(-m\log10) .
 \end{equation*}
  We also have
\begin{align*}
     \frac{\Lambda_{2}}{\log10}=(m+k)-n\left(\frac{\log\alpha}{\log10}\right)-\frac{\log(9C_{\alpha}/(a10^{l}-(a-b)))}{\log10}.
\end{align*}
We choose
\begin{align*}
     &c=22,~ \delta=\log10,~ X_{0}=2\cdot10^{56},~ \psi=-\frac{\log(9C_{\alpha}/(a10^{l}-(a-b)))}{\log10}\\
     &\vartheta=\frac{\log\alpha}{\log10},~\vartheta_{1}=-\log\alpha,~\vartheta_{2}=\log10,~\beta=-\log\left(\frac{9C_{\alpha}}{a10^{l}-(a-b)}\right).
\end{align*}
We find that $\frac{p_{124}}{q_{124}}= \frac{627183116477915636965979019027840650348169004436382793310275}{5135649646898035023105055510310316619786462973990152740408498}$ is the $124$th convergent of $\vartheta$ such that $q_{124}>X_{0}$ and it satisfies  $\|q\psi\|>\frac{2X_{0}}{q}$. Using Lemma \ref{le:2.3}, we get $m\leq66$.
Now, let
\begin{equation*}
  \Lambda_{3}=k\log 10- n\log\alpha+\log\left(\frac{a10^{l+m}-(a-b)10^{m}-(b-c)}{9C_{\alpha}}\right).
\end{equation*}
From \eqref{eq:10}, it is seen that
\begin{equation*}
     \lvert e^{\Lambda_{3}}-1\rvert<\frac{2.5}{\alpha^{n}}.
\end{equation*}

 Notice that $\Lambda_{3}\neq0$ as $e^{\Lambda_{3}}-1=\Gamma_{3}\neq0.$  Since $n>560$, the right-hand side in the above inequality is at most $\frac{1}{2}.$
Using Lemma \ref{le:*} we obtain
\begin{equation*}
 0<\lvert \Lambda_{3}\rvert<\frac{5}{\alpha^{n}},
\end{equation*}
 which implies that
 \begin{equation*}
    \left\lvert k\log 10- n\log\alpha+\log\left(\frac{a10^{l+m}-(a-b)10^{m}+(b-c)}{9C_{\alpha}}\right)\right\rvert<5\exp(-n\log\alpha) .
 \end{equation*}
  We also have
\begin{align*}
     \frac{\Lambda_{3}}{\log10}=k-n\left(\frac{\log\alpha}{\log10}\right)+\frac{\log((a10^{l+m}-(a-b)10^{m}+(b-c))/9C_{\alpha})}{\log10}.
\end{align*}
Thus, we take
\begin{align*}
     &c=5,~ \delta=\log\alpha,~ X_{0}=2\cdot10^{56},~ \psi=\frac{\log((a10^{l+m}-(a-b)10^{m}+(b-c))/9C_{\alpha})}{\log10}\\
     &\vartheta=\frac{\log\alpha}{\log10},~\vartheta_{1}=-\log\alpha,~\vartheta_{2}=\log10,~\beta=\log\left(\frac{a10^{l+m}-(a-b)10^{m}+(b-c)}{9C_{\alpha}}\right).
\end{align*}
We find that $\frac{p_{125}}{q_{125}}= \frac{2616187734565998164447356816798847641018036629887990573914437}{21422489321292086600361648904597606825079844749495429580710675}$ is the $125$th convergent of $\vartheta$ such that $q_{125}>X_{0}$ and it satisfies  $\|q\psi\|>\frac{2X_{0}}{q}$. Applying Lemma \ref{le:2.3}, we get $n\leq546$, which is a contradiction to our assumption that $n>560$. Hence, the theorem is proved.

\begin{flushright}
\begin{tabular}{l}
\textit{Department of Mathematics}\\
\textit{Sambalpur University, Jyoti Vihar, Burla, India}\\
\textit{kisanbhoi.95@suniv.ac.in} \\
\end{tabular}
\end{flushright}
\begin{flushright}
\begin{tabular}{l}
\textit{Department of Mathematics}\\
\textit{Sambalpur University, Jyoti Vihar, Burla, India}\\
\textit{prasantamath@suniv.ac.in} \\
\end{tabular}
\end{flushright}

\begin{thebibliography}{99}
\bibitem{ALAHMADI} A. Alahmadi, A. Altassan, F. Luca and H. Shoaib, Fibonacci numbers which are concatenations of two repdigits, {\it Quaest. Math.},  {\bf 44} (2021), 281--290.
\bibitem{ALAHMADI1} A. Alahmadi, A. Altassan, F. Luca and H. Shoaib, $k$-generalized Fibonacci numbers which are concatenation of two repdigits, {\it Glansik Mathemati$\check{c}$ki},  {\bf 56} (2021), 29--46.
\bibitem{Batte} H. Batte, T. P. Chalebgwa and M. Ddamulira, Perrin numbers that are concatenations of two repdigits, {\it Arab. J. Math.}  {\bf 11} (2022), 469--478.
\bibitem{Bugeaud1} Y. Bugeaud, M. Mignotte and S.Siksek, Classical and modular approaches to exponential Diophantine equations I. Fibonacci and Lucas perfect powers, {\it Ann. of Math. (2)} {\bf163} (2006), 969--1018.
\bibitem{Ddmulir} M. Ddamulira, Padovan numbers that are concatenations of two repdigits,  {\it Math. Slovaca}, {\bf 71} (2021), 275--284.
\bibitem{Ddamulira} M. Ddamulira, Tribonacci numbers that are concatenations of two repdigits, {\it Rev. Acad. Colombiana Cienc. Exact. F$\acute{i}$s Natur.}, {\bf 114} (2020), 1--10.
\bibitem{Ddamulira*} M. Ddamulira, Repdigits as sums of three Padovan numbers, {\it Bol. Soc. Mat. Mex.}, {\bf 26} (2020), 247--261.
\bibitem{Dujella} A. Dujella and A. Peth\H o, A generalization of a theorem of Baker and Davenport, \emph{Quart. J. Math. Oxford Ser.}, \textbf{49} (1998), 291-306.
\bibitem{Erduvan} F. Erduvan and R. Keskin, Lucas numbers which are concatenations of two repdigits, {\it Bol. Soc. Mat. Mex.}, {\bf 27} (2021), 1--11.
\bibitem{Keskin} F. Erduvan and R. Keskin, Lucas numbers which are concatenations of three repdigits, {\it Results Math.}, {\bf 76} (2021), 1--13.
\bibitem{GUZMAN}
 S. G$\acute{\text{u}}$zman S$\acute{\text{a}}$nchez and F. Luca,  Linear combinations of factorials and $s$-units in a binary recurrence sequence, {\it Ann. Math. du Qu$\acute{\text{e}}$.}, {\bf 38} (2014), 169--188.
\bibitem{Matveev} E. M. Matveev, An explicit lower bound for a homogeneous rational linear form in the logarithms of algebraic numbers II, {\it Izv. Ross. Akad. Nauk Ser. Mat.}, {\bf 64} (2000), 125--180. Translation in {\it Izv. Math.}, {\bf 64} (2000), 1217--1269.
\bibitem{Siar} Z. \c{S}iar and R. Keskin, $k$-generalized Pell numbers which are concatenation of two repdigits, {\it Mediterr. J. Math.}  {\bf 19} (2022), 1--17.
\bibitem{RAYGURU} S. G. Rayguru and G. K. Panda, Balancing numbers which are concatenation of two repdigits, {\it Bol. Soc. Mat. Mex.}  {\bf 26} (2020), 911--919.
\bibitem{W} M. Waldshmidt, {\it Diophantine Approximation on Linear Algebraic Groups: Transcendence Properties of the Exponential Function in Several Variables}, Springer-verlag, Berlin Heidel-berg, Berlin, 2000.
\bibitem{Weger} B.M.M. de Weger, {\it Algorithms for Diophantine Equations}, Stichting Mathematisch Centrum, Amsterdam 1989.
\end{thebibliography}
\end{document}